\documentclass{amsart}

\usepackage[shortlabels]{enumitem}
\usepackage{amssymb,mathrsfs,dsfont}
\usepackage[latin1]{inputenc}
\usepackage[pdftex]{hyperref}
\hypersetup{colorlinks=true,pdfstartview=FitH}

\allowdisplaybreaks[4]

\newtheorem{theo}{Theorem}
\newtheorem{lemm}[theo]{Lemma}
\newtheorem{prop}[theo]{Proposition}
\newtheorem{coro}[theo]{Corollary}
\theoremstyle{definition}

\newtheorem{exam}[theo]{Example}
\theoremstyle{remark}
\newtheorem{rema}[theo]{Remark}

\newcommand{\field}[1]{\mathds{#1}}
\newcommand{\C}{\field{C}}

\newcommand{\Z}{\field{Z}}
\newcommand{\N}{\field{N}}

\newcommand{\Cir}{\field{S}^1}

\let\emptyset\varnothing

\begin{document}
\title{Dynamics of self-maps in their primal topologies}
\author{Jos\'e C. Mart\'{\i}n}
\address{Departamento de Matem\'aticas\\
Ins\-ti\-tu\-to Ve\-ne\-zo\-la\-no de Inves\-ti\-ga\-ciones Cien\-t\'{\i}\-fi\-cas\\
Apar\-ta\-do pos\-tal~20632\\
Altos de Pipe~1020-A\\
Ve\-ne\-zue\-la}
\address{Departamento de Matem\'aticas Puras y Aplicadas\\
Universidad Sim\'on Bol\'{\i}var\\
Apar\-ta\-do pos\-tal 89000\\
Caracas 1086--A\\
Ve\-ne\-zue\-la}
\email{jmartin@usb.ve}

\date{December 26, 2025}

\subjclass{Primary 37B02; Secondary 37B20, 37B25, 54C99, 54F05, 54D10, 54E15}

\begin{abstract}
	We study a series of dynamical concepts for self-maps in the primal topology induced by them. Among the concepts studied are non-wandering points, limit points, recurrent points, minimal sets, transitive points and self-maps, topologically ergodic self-maps, weakly mixing self-maps, strongly mixing self-maps, Lyapunov stable self-maps, chaotic self-maps in the sense of Auslander-Yorke, chaotic self-maps in the sense of Devaney, asymptotic pairs, proximal pairs, and syndetically proximal pairs.
	
	Some results are given in the more general context of continuous self-maps in an Alexandroff topological space. We prove that a continuous self-map of an Alexandroff space is always Lyapunov stable.
\end{abstract}

\maketitle

\section{Introduction}
The purpose in this paper is to study the dynamics of a self-map in the primal topology induced by the self-map in its domain.

Let $f:X\to X$ be a self-map of the set $X$. The \emph{primal topology} induced by $f$ on $X$ is $\tau_f = \{A\subseteq X: f^{-1}(A)\subseteq A\}$. These topologies were introduced independently in \cite{sg} and \cite{echi}. In the first paper, they are called the ``functional Alexandroff topologies'', in the second one, they are called the ``primal topologies''. These two articles address some dynamic aspects of the self-map in the primal topology that it induces. Since the introduction of primal spaces, their properties have been extensively studied, see \cite{sg2,hl,lrt,lrs1,dlrt,et,lrs2,ls,mr}. However, no one has studied the dynamic aspects of the self-map in the primal topology that it induces in more depth.

A topology $\tau$ of $X$ is an \emph{Alexandroff topology} (or a \emph{principal topology}) if the intersection of every family of open sets is open. These topologies were introduced in \cite{alexandroff} with the name of ``Diskrete R\"aume''. The primal topology $\tau_f$ is an Alexandroff topology that makes $f$ a continuous map. Some of the results we obtain are given in the more general setting of continuous self-maps of Alexandroff spaces.

The Alexandroff topologies have been proven useful in many applications in computer and natural sciences, see for example \cite{khkm,kokm,wallden,sorkin,richmond}. These topologies also play a prominent role in understanding the structure of the lattice of topologies on a given set, see \cite{steiner}. There is also a characterization of the Collatz conjecture in terms of the primal topology induced by the Collatz map in the set of positive integers, see \cite{vg}.

The study of dynamical systems generated by a self-map in the primal topology induced by the self-map is a source of examples of dynamical systems in a topology that is not $T_1$ and/or uniformizable. See the following paragraph. Same, for continuous self-maps of Alexandroff spaces.

Since a $T_1$ Alexandroff space is a discrete space, in general, our topological spaces are not $T_1$. Therefore, our definitions are not necessarily the usual ones. Some of the dynamical notions we examine usually require the notion of uniformity. But the primal space induced by the self-map $f$ is uniformizable if and only if $f$ is pointwise periodic (i.e. all points are periodic by $f$, but not necessarily with a common period), see \cite[Theorem~2.10]{shhr}. Therefore, in the definition of these notions we use quasi-uniformities instead of uniformities. Every topological space admits a quasi-uniformity, see \cite{pervin} or \cite[Proposition~2.1]{fl}. For a general account about quasi-uniformities, see \cite{fl}.

In Section~\ref{sec-prel}, we give the definitions about topological dynamics that we will use in the paper. In this section, we consider continuous self-maps of topological spaces or quasi-uniform spaces. In Section~\ref{sec-alex}, we study some dynamical properties for continuous self-maps of Alexandorff spaces. We give results about quasi-periodic points, eventually periodic points, and recurrent points. We show that every continuous self-map in an Alexandroff space is Lyapunov stable under any quasi-uniformity that induces the Alexandroff topology. So there are no continuous self-maps in Alexandroff spaces that are chaotic in the sense of Auslander-Yorke. We see that the concepts of asymptotic pair, syndetically proximal pair, and proximal pair are the same when considering the finest quasi-uniformity of those that induce the Alexandroff topology.

In Section~\ref{sec-prim}, we study some dynamical properties of a self-map on the primal topology induced by this self-map. We characterize the non-wandering set. We prove that the sets of periodic points, limit points and recurrent points are equal. We prove that the minimal sets are the periodic orbits. We characterize the transitive self-maps, and topologically ergodic self-maps. We also prove that the notions of topologically ergodic, weakly mixing and strongly mixing are equivalent. Finally, we study the proximal pairs in the finest quasi-uniformity that induces the primal topology.

In this last section, we also consider an expansive self-map of the circle and compare its dynamics with the usual topology of the circle and the primal topology given by the self-map. See Example~\ref{exa-1}.

\section{Dynamical preliminaries}
\label{sec-prel}
In this section, we provide the definitions about topological dynamics that we will use in the following sections. Since our topological spaces are not $T_1$, in general, our definitions are not necessarily the usual ones. Let $X$ be a topological space, with topology $\tau$, and $f:X\to X$ a continuous self-map. Denote by $\N$, $\Z^+$, and $\Z^-$ the set of non-negative, positive, and negative integers, respectively.

Given $x\in X$, the \emph{orbit} of $x$ under $f$ is the set $\mathcal{O}(x) = \{f^k(x): k\in\N\}$. $x$ is a \emph{periodic point} of $f$ if there is $k\in\Z^+$ such that $f^k(x) = x$. In this case, we say that $\mathcal{O}(x)$ is a \emph{periodic orbit} of $f$ and say that $|\mathcal{O}(x)|$ is the \emph{period} of $x$, where $|A|$ is the cardinality of $A$. A \emph{fixed point} is a periodic point with period $1$. $x$ is an \emph{eventually periodic point} of $f$ if there is $k\in \N$ such that $f^k(x)$ is a periodic point. We define \emph{eventually fixed point} in a similar manner.

A \emph{preorbit} of $x\in X$ is a function $\hat{x}\in X^{\N}\cup \bigcup\{X^{\N_m}: m\in\Z^+\}$, such that $\hat{x}(0) = x$ and $f(\hat{x}(k)) = \hat{x}(k-1)$, for all $k\in D_{\hat{x}}\setminus\{0\}$. Where $\N_m = \{k\in\N: k\le m\}$ and $D_{\hat{x}}$ denotes the domain of $\hat{x}$. Let $\mathrm{PO}(x)$ denote the set of all preorbits of $x$. $\hat{x}\in \mathrm{PO}(x)$ is \emph{infinite} if $D_{\hat{x}} = \N$. $\hat{x}$ is \emph{complete} if it is infinite or $D_{\hat{x}} = \N_m$ and $\hat{x}(m)\notin f(X)$. In other words, $\hat{x}$ is a complete preorbit of $x$ if it is not the restriction to a strict subset of the domain of any other preorbit of $x$. It is easy to see that every preorbit of $x$ is the restriction of a complete preorbit of $x$.

Given two subsets $A$ and $B$ of $X$, we define
$$
D(A,B) = \{k\in\N: f^k(A)\cap B\neq \emptyset\}.
$$
If $A = \{x\}$ (resp. $B = \{y\}$) we can use $D(x,B)$ (resp. $D(A,y)$) instead of $D(A,B)$. If we want to highlight $f$ we put $\mathcal{O}_f(x)$ or $D_f(A,B)$ instead of $\mathcal{O}(x)$ or $D(A,B)$, respectively.

We state that $A\subseteq \field{T}$ is a \emph{syndetic subset} of $\field{T}$ if there exists $m\in\N$, such that $[k,k+m]\cap A\neq\emptyset$, for all $k\in\field{T}$, where $\field{T}$ is $\N$ or $\Z$, see \cite{gh}. $x\in X$ is an \emph{almost periodic point} of $f$ if $D(x,U)$ is a syndetic subset of $\N$, for all neighborhood $U$ of $x$.

A point $x\in X$ is almost periodic if and only if for every neighborhood $U$ of $x$ there exists $m\in \N$, such that $\mathcal{O}(x)\subseteq \bigcap\{f^{-i}(U): 0\le i\le m\}$. This affirmation is part of \cite[Lemma~4.2.1]{vries}; the proof given there applies to this general setting.

We say that $x$ is a \emph{quasi-periodic point} of $f$ if for all neighborhood $U$ of $x$ there exists $m\in\Z^+$, such that $m\N\subseteq D(x,U)$.

A point $x\in X$ is said to be a \emph{transitive point} of $f$ if for all non-void open set $U$ and $n\in\N$ there is $k\ge n$, such that $f^k(x)\in U$. In such a case, we say that $f$ is \emph{transitive}. Note that $x$ is a transitive point of $f$ if and only if $D(x,U)$ is an infinite subset in $\N$, for all non-void open set $U$.

We say that $f$ is \emph{topologically ergodic} if $D(U,V)$ is an infinite set, for all pair of non-void open sets $U$ and $V$. It is easy to see that a transitive self-map is topologically ergodic. In general, the converse is not true, see the comment that follows Theorem~\ref{the-5}.

Let $f_2:X^2\to X^2$ be the self-map defined by $f_2(x,y) = (f(x),f(y))$. We say that $f$ is \emph{weakly mixing} if $f_2$ is topologically ergodic. Since $D_{f_2}(U_1\times U_2,V_1\times V_2) = D_f(U_1,V_1)\cap D_f(U_2,V_2)$, $f$ is weakly mixing if and only if $D_f(U_1,V_1)\cap D_f(U_2,V_2)$ is an infinite set, forall quartet of non-void open sets $U_1$, $U_2$, $V_1$, and $V_2$ in $X$. We state that $f$ is \emph{strongly mixing} if $\N\setminus D(U,V)$ is finite, for all pair of non-void open sets $U$ and $V$ in $X$. The following implications follow easily from definitions
$$
\text{strongly mixing}\Rightarrow \text{weakly mixing}\Rightarrow \text{topologically ergodic.}
$$

A set $A\subseteq X$ is \emph{invariant} by $f$, or \emph{$f$-invariant}, if $f(A)\subseteq A$. A set $\Lambda\subseteq X$ is a \emph{minimal set} of $f$ if it is non-empty, closed, invariant by $f$, and it does not contain a proper subset that is non-empty, closed, and invariant by $f$.

We say that $x$ is a \emph{wandering point} of $f$ if exist a neighborhood $U$ of $x$ and $n\in\Z^+$, such that $U\cap f^k(U) = \emptyset$, for all $k\ge n$. Otherwise we say that $x$ is a \emph{non-wandering point} of $f$. We denote by $\Omega(f)$ the set of non-wandering points.

Given $x\in X$, the set
$$
\omega(x) = \bigcap\bigl\{\overline{\mathcal{O}(f^k(x))}: k\in \N\bigr\}
$$
is called the \emph{limit set} of the point $x$ under $f$. Note that $y\in\omega(x)$ if and only if there is a sequence $\{n_k\}\subseteq \N$, such that
$$
\lim_{k\to\infty} n_k = \infty\qquad\text{and}\qquad y\in \lim\{f^{n_k}(x)\}_{k\in\N},
$$
i.e. $\{f^{n_k}(x)\}_{k\in\N}$ converges to $y$.

A point $x$ is said to be a \emph{recurrent point} of $f$ if $D(x,U)$ is  an infinite set, for all neighborhood $U$ of $x$. We denote by $R(f)$ the set of recurrent points of $f$.

Let $(X,\mathcal{U})$ be a quasi-uniform space, $\tau_{\mathcal{U}}$ the topology induced by $\mathcal{U}$, see \cite[\S~1.4]{fl}, and $f:X\to X$ be a continuous self-map with respect to the topology $\tau_{\mathcal{U}}$. We say that $x\in X$ is \emph{stable} or \emph{Lyapunov stable} for $f$, or $f$ is \emph{stable} or \emph{Lyapunov stable} in $x$, if for all $U\in\mathcal{U}$ there exists $V$, neighborhood of $x$, such that $(f^k(x),f^k(y))\in U$, for all $k\in\N$ and $y\in V$. $f$ is \emph{stable} or \emph{Lyapunov stable} if it is stable at every point of $X$.

Given $U\in\mathcal{U}$ and $x\in X$, we define
$$
N(x,U) = \{y\in X: \text{$(f^k(x),f^k(y))\in U$, for all $k\in\N$}\}.
$$
Note that $f$ is Lyapunov stable in $x$ if and only if $N(x,U)$ is a neighborhood of $x$, for all $U\in\mathcal{U}$.

We state that $f$ is \emph{sensitive to initial conditions} if there exists $U\in\mathcal{U}$, such that, for all $x\in X$, $N(x,U)$ is not a neighborhood of $x$. We say that $f$ is \emph{chaotic in the sense of Auslander-Yorke}, or \emph{AY-chaotic}, whenever $f$ is transitive and sensitive to initial conditions. Note that if $f$ is Lyapunov stable at some point, then $f$ is not sensitive to initial conditions, and so it is not AY-chaotic. A self-map $f$ is said to be \emph{chaotic in the sense of Devaney}, or \emph{D-chaotic},  whenever $f$ is transitive, sensitive to initial conditions, and the periodic points are dense in $X$. By definition, D-chaotic implies AY-chaotic.

A pair of points $(x,y)\in X^2$ is said to be an \emph{asymptotic pair} of $f$ in $(X,\mathcal{U})$ if, for all $U\in \mathcal{U}$, there exists $n\in\N$, such that $(f^k(x),f^k(y))\in U$, for all $k\ge n$. Let $\mathrm{Asym}(\mathcal{U},f)$ denote the set of asymptotic pairs of $f$ in $(X,\mathcal{U})$. It is evident that $(x,y)\in \mathrm{Asym}(\mathcal{U},f)$ if and only if $\N\setminus D_{f_2}((x,y),U)$ is a finite subset of $\N$, for all $U\in\mathcal{U}$.

A pair of points $(x,y)\in X^2$ is said to be a \emph{proximal pair} of $f$ in $(X,\mathcal{U})$ if $D_{f_2}((x,y),U)$ is an infinite subset of $\N$, for all $U\in \mathcal{U}$. Let $\mathrm{Prox}(\mathcal{U},f)$ denote the set of proximal pairs of $f$ in $(X,\mathcal{U})$.

A pair of points $(x,y)$ is said to be a \emph{syndetically proximal pair} of $f$ in $(X,\mathcal{U})$ if the set $D_{f_2}((x,y),U)$ is a syndetic subset of $\N$, for all $U\in \mathcal{U}$. Let $\mathrm{SyProx}(\mathcal{U},f)$ denote the set of syndetically proximal pairs of $f$ in $(X,\mathcal{U})$. Obviously,
\begin{equation}\label{eq-1}
	\mathrm{Asym}(\mathcal{U},f)\subseteq \mathrm{SyProx}(\mathcal{U},f)\subseteq \mathrm{Prox}(\mathcal{U},f).
\end{equation}

\section{Dynamics in Alexandroff spaces}
\label{sec-alex}
In this section, we give some results on dynamical systems generated by a continuous self-map $f$ of an Alexandroff topological space $(X,\tau)$. Let $V(x)$ denote the intersection of all open sets containing $x\in X$. Then $V(x)$ is the smallest open set containing $x$, i.e., $U$ is a neighborhood of $x$ if and only if $V(x)\subseteq U$. See \cite{alexandroff} and \cite{arenas} for Alexandroff spaces.

\newpage

Let $(X,\tau)$ be an Alexandroff space and $f:X\to X$ a continuous self-map.
\begin{theo}
	Let $x\in X$. The following statements apply.
	\begin{enumerate}[label=\roman*.,ref=(\roman*)]
		\item\label{it-1} If there exists $k\in\Z^+$, such that $f^k(x)\in V(x)$, then $x$ is a quasi-periodic point of $f$.
		\item\label{it-2} If there exists $k\in\Z^+$, such that $f^k(x)\in V(x)$, and $V(x)$ is finite, then $x$ is an eventually periodic point of $f$.
		\item\label{it-3} If $x$ is a recurrent point of $f$ then it is a quasi-periodic point of $f$.
		\item\label{it-4} If $x$ is a recurrent point of $f$ and $V(x)$ is finite, then it is an eventually periodic point of $f$.
	\end{enumerate}
\end{theo}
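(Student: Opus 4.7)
The plan is to extract a single ``engine lemma'' about Alexandroff spaces and then let parts (i)--(iv) fall out almost immediately. The two facts I will lean on are:
(a) if $y\in V(x)$ then $V(y)\subseteq V(x)$, because any open set containing $x$ must contain $V(x)$ and hence $y$, and any open set containing $y$ appears in the family intersected to form $V(y)$;
(b) continuity of $f$ gives $f(V(x))\subseteq V(f(x))$, because $f^{-1}(V(f(x)))$ is an open neighborhood of $x$ and hence contains $V(x)$. Iterating (b) yields $f^k(V(x))\subseteq V(f^k(x))$ for all $k\in\N$.

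For part \ref{it-1}, I combine (a) and (b) under the hypothesis $f^k(x)\in V(x)$ to conclude $V(x)$ is $f^k$-invariant: by (a), $V(f^k(x))\subseteq V(x)$, so by (b), $f^k(V(x))\subseteq V(f^k(x))\subseteq V(x)$. An easy induction then gives $f^{nk}(x)\in V(x)$ for every $n\in\N$. Since any neighborhood $U$ of $x$ satisfies $V(x)\subseteq U$, we obtain $k\N\subseteq D(x,U)$, which is exactly quasi-periodicity with witness $m=k$.

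For part \ref{it-2}, I use the same $f^k$-invariance: the sequence $\{f^{nk}(x)\}_{n\in\N}$ lies entirely in the finite set $V(x)$, so by the pigeonhole principle there exist $n<n'$ with $f^{nk}(x)=f^{n'k}(x)$. Hence $f^{nk}(x)$ is periodic for $f$, making $x$ eventually periodic. Parts \ref{it-3} and \ref{it-4} then reduce trivially to \ref{it-1} and \ref{it-2}: recurrence at $x$ forces $D(x,V(x))$ to be infinite and in particular to contain some $k\in\Z^+$, which is precisely the hypothesis of \ref{it-1} and \ref{it-2}.

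There is no serious obstacle here; the only subtlety worth stating carefully is the combined use of (a) and (b) to get $f^k$-invariance of $V(x)$, and the observation that in an Alexandroff space it suffices to check the neighborhood condition at the single open set $V(x)$ because every neighborhood of $x$ contains it. All four statements are then one or two lines apart.
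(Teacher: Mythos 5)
Your proposal is correct and follows essentially the same route as the paper: both establish $f^k(V(x))\subseteq V(f^k(x))\subseteq V(x)$ from continuity and the hypothesis $f^k(x)\in V(x)$, iterate to get $f^{nk}(x)\in V(x)$, use finiteness of $V(x)$ via pigeonhole for \ref{it-2}, and reduce \ref{it-3} and \ref{it-4} to \ref{it-1} and \ref{it-2} by noting that recurrence supplies the required $k$. The only difference is that you spell out the two auxiliary facts (a) and (b) explicitly, which the paper leaves implicit.
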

\begin{proof}
	Let $k$ be as in the statement \ref{it-1} or \ref{it-2}. Since $f^k(x)\in V(x)$ and $f$ is continuous, it follows that $f^k(V(x))\subseteq V(f^k(x))\subseteq V(x)$. By induction we can prove that $f^{mk}(V(x))\subseteq V(x)$, for all $m\in\Z^+$. In particular, $f^{mk}(x)\in V(x)$, for all $m\in\Z^+$, then statement \ref{it-1} is true. If $V(x)$ is finite, then the orbit of $x$ by $f^k$ is finite, so $x$ is eventually periodic. This proves \ref{it-2}. \ref{it-3} and \ref{it-4} follow easily from \ref{it-1} and \ref{it-2}, respectively.
\end{proof}

Let $\mathcal{U}_\tau$ be the quasi-uniformity of $X$ given by \cite[Theorem~4.1]{arenas}. $\mathcal{U}_\tau$ has a base $\{U_\tau\}$, where
$$
U_\tau = \{(x,y)\in X^2: y\in V(x)\}.
$$
Then $\mathcal{U}_\tau = \{U\in \mathcal{P}(X^2): U_\tau\subseteq U\}$, where $\mathcal{P}(A)$ is the power set of $A$. We know that the topology induced by $\mathcal{U}_\tau$ is $\tau$, i.e. $\tau_{\mathcal{U}_\tau} = \tau$.

\begin{prop}\label{pro-1}
	The quasi-uniformity $\mathcal{U}_\tau$ is the supremum of the family of quasi-uniformities on $X$ that induces the topology $\tau$.
\end{prop}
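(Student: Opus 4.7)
The plan is to prove something slightly stronger than asked: that $\mathcal{U}_\tau$ is in fact the \emph{maximum} of the family of quasi-uniformities on $X$ inducing $\tau$, which automatically makes it the supremum. Since it has already been noted that $\tau_{\mathcal{U}_\tau} = \tau$, so that $\mathcal{U}_\tau$ belongs to the family, the only thing left to verify is that every quasi-uniformity $\mathcal{V}$ on $X$ with $\tau_{\mathcal{V}} = \tau$ satisfies $\mathcal{V}\subseteq \mathcal{U}_\tau$.

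To do this, I would fix an arbitrary $V\in \mathcal{V}$ and show that $U_\tau\subseteq V$, which by the explicit description $\mathcal{U}_\tau = \{U\in\mathcal{P}(X^2): U_\tau\subseteq U\}$ will give $V\in\mathcal{U}_\tau$. Take any $(x,y)\in U_\tau$, i.e.\ any $y\in V(x)$. The set $V[x] = \{z\in X: (x,z)\in V\}$ is, by the general theory of quasi-uniformities (see \cite[\S~1.4]{fl}), a neighborhood of $x$ in the topology $\tau_{\mathcal{V}}$, and by hypothesis $\tau_{\mathcal{V}} = \tau$. Since $\tau$ is Alexandroff, $V(x)$ is the smallest $\tau$-neighborhood of $x$, so $V(x)\subseteq V[x]$. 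Hence $y\in V[x]$, i.e.\ $(x,y)\in V$. This shows $U_\tau\subseteq V$, as required.

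The argument is really just one observation — that the Alexandroff property forces every quasi-uniform ball $V[x]$ to contain the minimal neighborhood $V(x)$ — and there is no serious obstacle; the only thing to be slightly careful about is distinguishing the minimal-neighborhood operator $V(\cdot)$ from the quasi-uniform entourages $V$ and their cross-sections $V[x]$, since the notation overlaps. No verification of closure under finite intersections, composition, or taking supersets is needed on the $\mathcal{V}$ side: we work entirely with a single arbitrary entourage of $\mathcal{V}$ and an arbitrary element of $U_\tau$.
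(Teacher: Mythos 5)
Your proof is correct and follows essentially the same route as the paper: both arguments reduce to the observation that for any entourage $V$ of a quasi-uniformity inducing $\tau$, the set $V[x]$ is a $\tau$-neighborhood of $x$ and therefore contains the minimal open set $V(x)$, whence $U_\tau\subseteq V$ and $\mathcal{V}\subseteq\mathcal{U}_\tau$. The only difference is cosmetic: you spell out the neighborhood fact that the paper cites from Fletcher--Lindgren, and you explicitly note that $\mathcal{U}_\tau$ is in fact the maximum of the family, which the paper leaves implicit.
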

\begin{proof}
	Let $\mathcal{U}$ be a quasi-uniformity of $X$ that induces the topology $\tau$, and $U\in\mathcal{U}$. By \cite[Proposition~1.4]{fl}, $V(x)\subseteq U(x)$, for all $x\in X$. Then $U_\tau\subseteq U$. So $\mathcal{U}_\tau$ is finer than $\mathcal{U}$, i.e. $\mathcal{U}\subseteq \mathcal{U}_\tau$.
\end{proof}

The following result says that discrete dynamical systems in Alexandroff spaces are always Lyapunov stable.
\begin{theo}\label{the-3}
	Let $f:X\to X$ be a continuous self-map of the Alexandroff space $X$, with topology $\tau$, and $\mathcal{U}$ a quasi-uniformity of $X$ that induces this topology. Then $f$ is Lyapunov stable.
\end{theo}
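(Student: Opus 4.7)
The plan is to show that for every $x\in X$ and every entourage $U\in\mathcal{U}$, the minimal open neighborhood $V(x)$ is contained in $N(x,U)$. Since $V(x)$ is itself a neighborhood of $x$ in the Alexandroff topology $\tau$, this immediately gives that $N(x,U)$ is a neighborhood of $x$, which is the definition of Lyapunov stability at $x$.

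The key input is Proposition~\ref{pro-1}, which tells us that $\mathcal{U}\subseteq \mathcal{U}_\tau$; equivalently, for every $U\in\mathcal{U}$ we have $U_\tau\subseteq U$, i.e., whenever $b\in V(a)$ the pair $(a,b)$ lies in $U$. So it suffices to prove the purely topological statement
$$
f^k(V(x))\subseteq V(f^k(x))\qquad\text{for every }k\in\N.
$$
This is where continuity of $f$ meets the Alexandroff structure: the preimage $f^{-1}(V(f(x)))$ is an open set containing $x$, and since $V(x)$ is the \emph{smallest} open set containing $x$, we get $V(x)\subseteq f^{-1}(V(f(x)))$, i.e.\ $f(V(x))\subseteq V(f(x))$. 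A straightforward induction on $k$ then yields $f^k(V(x))\subseteq V(f^k(x))$.

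Putting the two pieces together, for any $y\in V(x)$ and any $k\in\N$, we have $f^k(y)\in V(f^k(x))$, hence $(f^k(x),f^k(y))\in U_\tau\subseteq U$. This shows $V(x)\subseteq N(x,U)$, as desired. Since $x\in X$ and $U\in\mathcal{U}$ were arbitrary, $f$ is Lyapunov stable.

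There is no real obstacle here: the proof is a short two-line argument once one observes that Alexandroff spaces come equipped with minimal neighborhoods $V(x)$ that are automatically propagated by continuous maps ($f(V(x))\subseteq V(f(x))$), and that by Proposition~\ref{pro-1} every quasi-uniformity inducing $\tau$ is coarser than $\mathcal{U}_\tau$, whose basic entourage $U_\tau$ is exactly tailored to these minimal neighborhoods.
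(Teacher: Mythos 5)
Your proof is correct and follows essentially the same route as the paper's: invoke Proposition~\ref{pro-1} to reduce to the basic entourage $U_\tau$, and use continuity to propagate the minimal neighborhoods via $f^k(V(x))\subseteq V(f^k(x))$. The only difference is that you spell out the one-step inclusion $f(V(x))\subseteq V(f(x))$ and the induction explicitly, which the paper leaves implicit.
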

\begin{proof}
	Let $\mathcal{U}$ be as in the statement of the theorem. By Proposition~\ref{pro-1}, $\mathcal{U}\subseteq \mathcal{U}_\tau$. Given $U\in\mathcal{U}$ and $x\in X$ take $V(x)$ as neighborhood of $x$. Since $f$ is continuous, $f^k(y)\in f^k(V(x))\subseteq V(f^k(x))\subseteq U(f^k(x))$, for all $y\in V(x)$ and $k\in \N$. Then $f$ is Lyapunov stable.
\end{proof}

\begin{rema}
	A self-map in an Alexandroff topological space is not sensitive to initial conditions, so it is not AY-chaotic and even less so D-chaotic.
\end{rema}

\begin{theo}\label{the-2}
	Let $X$ be an Alexandroff space, with topology $\tau$, and $f:X\to X$ a continuous self-map. Then $\mathrm{Asym}(\mathcal{U}_\tau,f) = \mathrm{SyProx}(\mathcal{U}_\tau,f) = \mathrm{Prox}(\mathcal{U}_\tau,f)$.
\end{theo}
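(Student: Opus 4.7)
The plan is to exploit the chain of inclusions~\eqref{eq-1} and reduce the statement to showing the reverse inclusion $\mathrm{Prox}(\mathcal{U}_\tau,f)\subseteq \mathrm{Asym}(\mathcal{U}_\tau,f)$. Since $\mathcal{U}_\tau$ is generated by the single entourage $U_\tau$ and $\mathcal{U}_\tau=\{U\in\mathcal{P}(X^2):U_\tau\subseteq U\}$, it suffices to verify the asymptotic condition against $U_\tau$ itself: once $(f^k(x),f^k(y))\in U_\tau$ for all large $k$, the same holds for every $U\in\mathcal{U}_\tau$.

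The crux is a forward-propagation lemma: if $(f^k(x),f^k(y))\in U_\tau$ for some $k\in\N$, then $(f^m(x),f^m(y))\in U_\tau$ for every $m\ge k$. The proof is the same computation used in Theorem~\ref{the-3}: by continuity of $f$ at $f^k(x)$ one has $f(V(f^k(x)))\subseteq V(f^{k+1}(x))$, so from $f^k(y)\in V(f^k(x))$ we get $f^{k+1}(y)\in V(f^{k+1}(x))$, and the statement follows by induction on $m-k$.

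With this lemma in hand, I would pick $(x,y)\in \mathrm{Prox}(\mathcal{U}_\tau,f)$ and apply the definition of proximal pair to the particular entourage $U_\tau\in\mathcal{U}_\tau$. Then $D_{f_2}((x,y),U_\tau)$ is infinite, hence non-empty; let $n$ be any element of it. By the forward-propagation lemma, $(f^k(x),f^k(y))\in U_\tau\subseteq U$ for every $k\ge n$ and every $U\in\mathcal{U}_\tau$. This is exactly the statement $(x,y)\in \mathrm{Asym}(\mathcal{U}_\tau,f)$, so $\mathrm{Prox}(\mathcal{U}_\tau,f)\subseteq \mathrm{Asym}(\mathcal{U}_\tau,f)$, and combined with~\eqref{eq-1} the three sets coincide.

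I do not expect any serious obstacle: the argument is essentially the observation that membership in $U_\tau$ is preserved forward under continuous self-maps of an Alexandroff space, which is the same mechanism driving the Lyapunov-stability result in Theorem~\ref{the-3}. The only mild subtlety is remembering to invoke proximality against $U_\tau$ specifically (rather than an arbitrary $U$), exploiting that $U_\tau$ is the minimum element of $\mathcal{U}_\tau$.
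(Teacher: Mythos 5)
Your proposal is correct and follows essentially the same route as the paper: both extract a single $n$ with $(f^n(x),f^n(y))\in U_\tau$ from proximality, propagate membership in $U_\tau$ forward via continuity ($f^{n+k}(y)\in f^k(V(f^n(x)))\subseteq V(f^{n+k}(x))$), and then close the loop with the chain of inclusions~\eqref{eq-1}. The only difference is presentational: you isolate the forward-propagation step as an explicit lemma, while the paper inlines it.
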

\begin{proof}
	Let $(x,y)$ be a proximal pair of $f$ in $(X,\mathcal{U}_\tau)$. Then there exists $n\in\N$ such that $(f^n(x),f^n(y))\in U_\tau$, i.e. $f^n(y)\in V(f^n(x))$. By the continuity of $f$, $f^{n+k}(y)\in f^k(V(f^n(x))\subseteq V(f^{n+k}(x))$, for all $k\in\N$. Then $\mathrm{Prox}(\mathcal{U}_\tau,f) \subseteq \mathrm{Asym}(\mathcal{U}_\tau,f)$. By \eqref{eq-1}, $\mathrm{Asym}(\mathcal{U}_\tau,f) = \mathrm{SyProx}(\mathcal{U}_\tau,f) = \mathrm{Prox}(\mathcal{U}_\tau,f)$.
\end{proof}

\section{The dynamical system generated by $f$ on $(X,\tau_f)$}
\label{sec-prim}
In this section, we will study the dynamical system generated by a self-map $f$ on the primal topological space induced by $f$.

Let $X$ be a set, $f$ a self-map of $X$, and $\tau_f$ the primal topology of $X$ induced by $f$. It is easy to prove that $\tau_f$ is an Alexandroff topology on $X$; $A\subseteq X$ is $\tau_f$-closed if and only if $f(A)\subseteq A$; in particular, $\overline{\{x\}} = \mathcal{O}(x)$, for all $x\in X$; and $f$ is $\tau_f$-continuous, see \cite{sg}. In the following, $\tau_f$ is the topology unless expressly stated.

Let $\mathcal{U}_f$ denote the quasi-uniformity $\mathcal{U}_{\tau_f}$. Since $X$ is an Alexandroff space, all the results of Section~\ref{sec-alex} apply to $f$ and $\mathcal{U}_f$. For example, $\mathcal{U}_f$ is a quasi-uniformity of $X$ that induces the topology $\tau_f$, and it is the supremum of the family of quasi-uniformities on $X$ that induces $\tau_f$. As in Section~\ref{sec-alex}, we denote by $V(x)$ the smallest open set containing $x$. By \cite{sg},
\begin{equation*}
	V(x) = \{y\in X: \text{$\exists\, k\in\N$ such that $f^k(y) = x$}\}.
\end{equation*}

The following result is an immediate consequence of the characterization of $V(x)$ given above.
\begin{lemm}\label{lem-1}
	If there exists $n\in\Z^+$ such that $f^n(x)\in V(x)$, then $x$ is a periodic point.
\end{lemm}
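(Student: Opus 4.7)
The plan is to just unpack the characterization of $V(x)$ in the primal topology that was quoted right before the lemma. By definition of $\tau_f$, we have
$$
V(x) = \{y\in X : \exists\, k\in\N \text{ such that } f^k(y) = x\},
$$
so the hypothesis $f^n(x)\in V(x)$ immediately yields some $k\in\N$ with $f^k(f^n(x)) = x$, i.e.\ $f^{n+k}(x) = x$.

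Since $n\in\Z^+$ we have $n+k\ge 1$, so this is a genuine periodicity relation and $x$ is a periodic point (of period dividing $n+k$). That is the entire argument; there is no real obstacle, as the statement is essentially a one-line consequence of the explicit description of the minimal open neighborhoods in a primal space. The reason it deserves to be recorded as a lemma is that in a general Alexandroff space the corresponding theorem (item~\ref{it-1} of the preceding theorem) only yields quasi-periodicity, whereas the very specific form of $V(x)$ in the primal case upgrades this to outright periodicity — and that upgrade is exactly what this short unpacking provides.
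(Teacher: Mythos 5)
Your proof is correct and is exactly the argument the paper intends: the lemma is stated there without proof as ``an immediate consequence of the characterization of $V(x)$,'' and your unpacking ($f^k(f^n(x))=x$ gives $f^{n+k}(x)=x$ with $n+k\ge 1$) is that consequence spelled out. Nothing to add.
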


Our first theorem in this section is a characterization of the non-wandering set in this context.
\begin{theo}\label{the-4}
	We have that $\Omega(f) = \bigcap \{f^k(X): k\in\N\}$.
\end{theo}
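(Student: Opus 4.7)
My plan is to use the characterization of the smallest open neighborhood, namely $V(x) = \{y \in X : \exists\, k \in \N,\ f^k(y) = x\}$, to reduce the non-wandering condition to a statement about iterated preimages. Since $V(x)$ is the smallest neighborhood of $x$ in $\tau_f$, a point $x$ is non-wandering if and only if for every $n \in \Z^+$ there exists $k \geq n$ such that $V(x) \cap f^k(V(x)) \neq \emptyset$. This is the reformulation I would work with throughout.

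For the inclusion $\bigcap_{k\in\N} f^k(X) \subseteq \Omega(f)$, I would argue directly: if $x \in f^k(X)$ for every $k \in \N$, then given any $n \in \Z^+$, I simply pick $y \in X$ with $f^n(y) = x$; since $f^n(y) = x$, we have $y \in V(x)$, and clearly $x = f^n(y) \in V(x) \cap f^n(V(x))$. Hence every $n \geq 1$ already witnesses non-wandering (with $k = n$).

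For the reverse inclusion $\Omega(f) \subseteq \bigcap_{k\in\N} f^k(X)$, the first observation I would make is that the set $D(x) := \{j \in \N : x \in f^j(X)\}$ is downward closed in $\N$: if $f^j(z) = x$ and $0 \leq j' \leq j$, then $f^{j'}(f^{j-j'}(z)) = x$. Consequently $D(x)$ is either all of $\N$ or a finite initial segment $\{0,1,\dots,N\}$. Assuming for contradiction that $x$ is non-wandering but $D(x) = \{0,\dots,N\}$ for some $N$, I would apply the non-wandering condition with $n = N+1$ to obtain some $k \geq N+1$ and some $y \in V(x)$ with $f^k(y) \in V(x)$. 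Unfolding the definition of $V(x)$ twice gives integers $m, m' \geq 0$ with $f^m(y) = x$ and $f^{m'}(f^k(y)) = x$, hence $f^{m'+k}(y) = x$. Since $m' + k \geq N+1$, this places $x$ in $f^{N+1}(X)$, contradicting the maximality of $N$.

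I do not anticipate a substantial obstacle: the proof is essentially a bookkeeping exercise once one recognizes the downward-closure of $D(x)$ and translates the non-wandering condition through the explicit formula for $V(x)$ from \cite{sg}. The only subtle point is remembering that neighborhoods in $\tau_f$ are predecessor sets rather than forward orbits, which is what makes the equality with $\bigcap_{k} f^k(X)$ (an image, not a preimage, statement) work out cleanly.
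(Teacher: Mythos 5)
Your proposal is correct and follows essentially the same route as the paper: both directions reduce the non-wandering condition to the minimal neighborhood $V(x)$ via its explicit description as the set of iterated preimages of $x$, and the key step in the reverse inclusion --- extracting $f^{m'+k}(y)=x$ with $m'+k\ge k$ from $y\in V(x)$ and $f^k(y)\in V(x)$ --- is exactly the paper's argument. Your contradiction framing via the downward-closed set $D(x)$ is just a repackaging of the paper's direct use of the nested chain $X\supseteq f(X)\supseteq f^2(X)\supseteq\cdots$.
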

\begin{proof}
	Let $x\in \bigcap \{f^k(X): k\in\N\}$. For all $k\in\N$, there exists $y_k\in X$ such that $f^k(y_k) = x$. Then $y_k\in V(x)$ for all $k$. So $V(x)\cap f^k(V(x))\neq \emptyset$, for all $k$. Then $x\in \Omega(f)$. So $\bigcap \{f^k(X): k\in\N\}\subseteq \Omega(f)$.
	
	Let $x\in \Omega(f)$. By definition, there is a sequence $\{n_k\}_{k\in\N}$ in $\N$ such that
	$$
	\lim_{k\to\infty} n_k = \infty\qquad\text{and}\qquad V(x)\cap f^{n_k}(V(x))\neq \emptyset,\ \forall\, k.
	$$
	Then for all $k\in\N$, there exists $y_k\in V(x)$ such that $f^{n_k}(y_k)\in V(x)$. So there exists $m_k\ge n_k$, such that $f^{m_k}(y_k) = x$, for all $k$. Since $X\supseteq f(X)\supseteq f^2(X)\supseteq \cdots$ and $m_k\to\infty$, as $k\to\infty$, it follows that $x\in \bigcap \{f^k(X): k\in\N\}$. Then $\Omega(f) \subseteq \bigcap \{f^k(X): k\in\N\}$.
\end{proof}

Let $\sim$ be the equivalence relation on $X$ defined by $x\sim y$ if there are $m,n\in\N$, such that $f^m(x) = f^n(y)$. In this case, we say that $x$ and $y$ are \emph{orbitally related}. By \cite[Theorem~2.1]{sg}, $X/\sim$ is the set of all connected components of $X$. The connected component of $X$ that contains $x$ is the set of points in $X$ that are orbitally related to $x$, and will be denoted by $C_x$. The following theorem contains some of the basic results given in \cite{echi}.
\begin{theo}\label{the-1}
	Let $C\in X/\sim$. The following statements apply.
	\begin{enumerate}[label=\roman*.,ref=(\roman*)]
		\item\label{it-5} $C$ is invariant by $f$.
		\item\label{it-6} $C$ cannot have two different periodic orbits.
		\item\label{it-7} If $C$ contains the periodic orbit $\mathcal{O}(x)$, then for all $y\in C$ there exists $n\in N$, such that $f^n(y) = x$. In particular, $R(f|_C) = \mathcal{O}(x)$ and $\omega(y) = \mathcal{O}(x)$, for all $y\in C$.
		\item\label{it-8} If $C$ does not contain any periodic orbits, then for all $x,y\in C$ there exists $n\in \N$, such that $f^k(y)\notin V(x)$, for all $k\ge n$. In particular, $R(f|_C) = \emptyset$ and $\omega(x) = \emptyset$, for all $x\in C$.
		\item\label{it-9} $\mathrm{R}(f) = \bigcup \{\omega(x): x\in X\}$ is the set of all periodic points of $f$.
		\item\label{it-9a} $\Lambda$ is a minimal set of $f$ if and only if it is a periodic orbit of $f$.
	\end{enumerate}
\end{theo}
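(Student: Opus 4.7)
The plan is to treat the six items in order, relying on two structural facts throughout: the explicit description $V(x) = \{z \in X : \exists\, k \in \N,\ f^k(z) = x\}$ recalled just before the theorem, and the fact (from the opening of Section~\ref{sec-prim}) that in $\tau_f$ a set is closed iff it is $f$-invariant, so in particular $\overline{\mathcal{O}(y)} = \mathcal{O}(y)$ for every $y \in X$. Items~\ref{it-5} and~\ref{it-6} are immediate unwrappings of the definition of $\sim$: $f(x) \sim x$ via $f^{1}(x) = f^{0}(f(x))$, giving $f(C) \subseteq C$; and if $\mathcal{O}(x)$ and $\mathcal{O}(y)$ were two periodic orbits in a common class $C$, the witness $f^{m}(x) = f^{n}(y)$ of $x \sim y$ would lie in their intersection, forcing them to coincide.

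For~\ref{it-7}, I pick a periodic $x$ of period $p$ and an arbitrary $y \in C$, choose $m, n$ with $f^{m}(y) = f^{n}(x)$, and reduce $n$ modulo $p$ to get $f^{m}(y) = f^{i}(x)$ with $0 \leq i < p$. Then $f^{m + p - i}(y) = f^{p}(x) = x$, exhibiting the required $N$. From $f^{N}(y) = x$, the tail of the orbit of $y$ cycles through $\mathcal{O}(x)$, and the identity $\overline{\mathcal{O}(x)} = \mathcal{O}(x)$ yields $\omega(y) = \mathcal{O}(x)$. For $R(f|_C) = \mathcal{O}(x)$, any recurrent $y \in C$ satisfies $f^{k}(y) \in V(y)$ for some $k \geq 1$; Lemma~\ref{lem-1} then makes $y$ periodic and~\ref{it-6} puts it in $\mathcal{O}(x)$. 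Conversely, periodic points are trivially recurrent.

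The heart of the proof is~\ref{it-8}. Assuming $C$ contains no periodic orbit, I argue by contradiction: if $D(y, V(x))$ were infinite for some $x, y \in C$, then for each $k$ in this set the description of $V(x)$ gives $j_k \geq 0$ with $f^{k + j_k}(y) = x$. The integers $N_k := k + j_k$ are unbounded, so the pigeonhole principle yields indices $N_k < N_l$ with $f^{N_k}(y) = f^{N_l}(y) = x$; applying $f^{N_l - N_k}$ to the first equation produces $f^{N_l - N_k}(x) = x$, contradicting the absence of periodic orbits in $C$. The choice $x = y$ then forces $R(f|_C) = \emptyset$. For $\omega(y)$, note that $\omega(y) \subseteq \overline{\mathcal{O}(y)} = \mathcal{O}(y) \subseteq C$; if $z \in \omega(y)$ then a subsequence $f^{n_k}(y)$ must enter $V(z)$ for all large $k$, contradicting the finiteness of $D(y, V(z))$ already established.

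Items~\ref{it-9} and~\ref{it-9a} are then consolidations. Partitioning $X$ by $\sim$ and combining~\ref{it-7} and~\ref{it-8} identifies both $R(f)$ and $\bigcup_x \omega(x)$ with the union of all periodic orbits, which is exactly the set of periodic points of $f$. For~\ref{it-9a}, a periodic orbit is non-empty, closed, invariant, and being a finite cycle contains no proper non-empty $f$-invariant subset; conversely any minimal $\Lambda$ equals $\overline{\{y\}} = \mathcal{O}(y)$ for every $y \in \Lambda$, and non-periodicity of $y$ would make $\mathcal{O}(f(y))$ a proper non-empty closed invariant subset of $\Lambda$, contradicting minimality. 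The main obstacle is~\ref{it-8}: the pigeonhole step that recovers a period from two visits of the orbit of $y$ to $x$ through $V(x)$ is the only substantive idea in the theorem, and once it is in place the remaining items follow by bookkeeping.
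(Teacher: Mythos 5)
Your proof is correct and follows essentially the same route as the paper: items \ref{it-5} and \ref{it-6} by unwinding $\sim$, item \ref{it-7} by producing $n$ with $f^n(y)=x$ from a witness of $x\sim y$, item \ref{it-8} by showing two visits of $\mathcal{O}(y)$ to $V(x)$ would force $x$ to be periodic (which is exactly the content of Lemma~\ref{lem-1}, which the paper cites and you re-derive inline via pigeonhole), and items \ref{it-9} and \ref{it-9a} as consolidations, with the minimality argument via $\overline{\{y\}}=\mathcal{O}(y)$ and $\mathcal{O}(f(y))\varsubsetneq\mathcal{O}(y)$ matching the paper's. Your write-up simply supplies more detail than the paper at each step.
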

\begin{proof}
	 Since $x\sim f(x)$, for all $x\in X$, it follows \ref{it-5}. Since $x\not\sim y$ if $\mathcal{O}(x)$ and $\mathcal{O}(y)$ are two distinct periodic orbits, it follows \ref{it-6}.
	 
	 Let us see \ref{it-7}. Suppose that $x\in C$ is a periodic point.  Given $y\in C$ there exist $k,m\in\N$, such that $f^k(x) = f^m(y)$. Then there exists $n\ge m$, such that $f^n(y) = x$. This proves the first affirmation of \ref{it-7}. The second one follows from the first one.
	 
	 The first statement of \ref{it-8} follows immediately from Lemma~\ref{lem-1}. The second one follows from the first one. \ref{it-9} follows from \ref{it-5}--\ref{it-8}.
	 
	 Let us see \ref{it-9a}. The sufficiency is evident. Necessity. Suppose that $\Lambda\subseteq X$ is a minimal set of $f$. Then there exists $x\in\Lambda$. Since $\overline{\{x\}} = \mathcal{O}(x)$ is a non-empty, closed, and $f$-invariant set, then $\Lambda = \mathcal{O}(x)$. Since $\mathcal{O}(f(x))\varsubsetneq \mathcal{O}(x)$ if $x$ is not a periodic point of $f$, then $\mathcal{O}(x)$ is a periodic orbit of $f$.
\end{proof}

Let us look at some examples.
\begin{exam}
	Let $X_0 = \Z\times \N$ and $f_{0,n}: X_0\to X_0$ be the self-map given by $f_{0,n}(i,j) = \bigl(i+1, [j/n]\bigr)$, where $n\in\Z^+$, $n\ge 2$ and $[x]$ denote the integer part of $x$. It is evident that $f$ is a $n$-to-one map, i.e. $|f_{0,n}^{-1}(i,j)| = n$, for all $(i,j)\in X_0$. Since $f_{0,n}^{m+i_2-i_1}(i_1,j_1) = (m+i_2,0) = f_{0,n}^m(i_2,j_2)$, if $mn > \max\{j_1,j_2\}$ and $i_2\ge i_1$, then $X_0$ is connected in the primal topology induced by $f_{0,n}$ on $X_0$.
	
	Since $f_{0,n}(X_0) = X_0$, $\Omega(f_{0,n}) = X_0$. Since $f_{0,n}$ has no periodic orbits, then
	$$
	R(f_{0,n}) = \omega(x) = \emptyset,
	$$
	for all $x\in X_0$, and $f_{0,n}$ have no minimal sets. By Theorem~\ref{the-3}, $f_{0,n}$ is Lyapunov stable, and $f$ is not AY-chaotic.
\end{exam}

\begin{exam}
	Let $X_m = \Z_m\times \N$ and $f_{m,n}: X_m\to X_m$ be the self-map given by $f_{m,n}(i,j) = \bigl((i+1) \mod m, [j/n]\bigr)$, where $m,n\in\Z^+$ and $n\ge 2$. As in the previous example, $f_{m,n}$ is an $n$-to-one map and $X_m$ is connected in the primal topology induced by $f$.
	
	Since $f_{m,n}(X_m) = X_m$ and $(0,0)$ is a periodic point of $f_{m,n}$, then
	$$
	\Omega(f_{m,n}) = X_m\quad\text{and}\quad R(f_{m,n}) = \omega(x) = \mathcal{O}(0,0) = \Z_m\times\{0\},
	$$
	for all $x\in X_m$, and $\mathcal{O}(0,0)$ is the unique minimal set of $f$. As in the previous example, $f_{m,n}$ is Lyapunov stable, and it is not AY-chaotic.
\end{exam}

The following example is the well-known expansive self-map of the circle when we consider the usual topology of the circle. With the primal topology, the circle is the disconnected union of an uncountable number of copies of the previous two examples.
\begin{exam}\label{exa-1}
	Let $\Cir = \{z\in\C: |z|=1\}$, $\tau$ be the usual topology of $\Cir$, and $f:\Cir\to \Cir$ be the self-map given by $f(z) = z^m$, for $m\in\Z^+$ and $m\ge 2$. It is well-known that, with the topology $\tau$, $\Cir$ is connected, $f$ is sensitive to initial conditions; the periodic points are dense in $\Cir$; $\omega(x) \neq \emptyset$, for all $x\in\Cir$; $f$ is transitive, i.e. there exists $x\in X$ such that $\omega(x) = \Cir$; so $f$ is D-chaotic, and all the points are non-wandering, i.e. $\Omega(f) = \Cir$, see for example \cite{kh}.
	
	With the primal topology induced by $f$, all the points are non-wandering, i.e. $\Omega(f) = X$, because $f(\Cir) = \Cir$, see Theorem~\ref{the-4}. But $\Cir$ has an uncountable number of connected components that are $f$-invariant. So, $f$ is not transitive. By Theorem~\ref{the-3}, $f$ is Lyapunov stable; so $f$ is not sensitive to initial conditions. In particular, $f$ is not AY-chaotic, and even $f$ restricted to each of the connected components of $\Cir$ is not AY-chaotic.
	
	If $x\in\Cir$ is eventually periodic, then $(C_x,f|_{C_x})$ is conjugate to $(X_k,f_{k,m})$, where $k$ is the minimal period of the only periodic orbit $\mathcal{O}(y)$ of $f$ in $C_x$. In this case, by the statement~\ref{it-7} of Theorem~\ref{the-1}, all the points in $C_x$ are eventually periodic, in fact $y\in \mathcal{O}(z)$ and $R(f|_{C_x}) = \omega(z) = \mathcal{O}(y)$, for all $z\in C_x$. If $x$ is not eventually periodic then $(C_x,f|_{C_x})$ is conjugate to $(X_0,f_{0,m})$. In this case, $R(f|_{C_x}) = \omega(z) = \emptyset$, for all $z\in C_x$.
\end{exam}

The following theorem characterizes the transitivity of $f$.
\begin{theo}
	The selft-map $f$ is transitive if and only if $X$ consists of a single periodic orbit.
\end{theo}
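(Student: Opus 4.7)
The plan is to handle the two directions separately; the key observations are the description of $V(y)$ in a primal space and the fact that transitive points must visit every non-empty open set.

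For sufficiency, suppose $X = \mathcal{O}(y)$ for some periodic point $y$ of period $p$. Then $f$ restricted to $X$ is a cyclic permutation of a finite set, so the only subsets $A$ with $f^{-1}(A)\subseteq A$ are $\emptyset$ and $X$. Hence $\tau_f$ is the indiscrete topology, and the definition of transitivity is trivially satisfied by $x = y$ (since $D(y,X) = \N$).

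For necessity, let $x$ be a transitive point of $f$. I would first show that $X = \mathcal{O}(x)$: for every $y\in X$, the set $V(y)$ is open and non-empty, so there exists $k\in\N$ with $f^k(x)\in V(y)$. By the characterization $V(y) = \{z\in X : \exists\, m\in\N,\ f^m(z) = y\}$ recalled from \cite{sg}, this yields some $m\in\N$ with $f^{k+m}(x) = y$, so $y\in\mathcal{O}(x)$.

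Next I would argue that $x$ must be periodic. Suppose, toward a contradiction, that $x$ is not periodic. Then the orbit $\mathcal{O}(x) = X$ consists of pairwise distinct points $f^n(x)$, $n\in\N$. I claim that $V(x) = \{x\}$: if $y = f^n(x)\in V(x)$, there is $k\in\N$ with $f^k(y) = x$, i.e. $f^{k+n}(x) = x$; by non-periodicity of $x$ this forces $k+n = 0$, hence $y = x$. Thus $\{x\}$ is a non-empty open set, so by transitivity $D(x,\{x\})$ must be infinite, giving infinitely many $k\in\N$ with $f^k(x) = x$ and contradicting non-periodicity. Therefore $x$ is periodic, and combined with $X = \mathcal{O}(x)$ this shows $X$ is a single periodic orbit.

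There is no real obstacle here; the only subtle point is to notice that transitivity, as defined in this paper, forces $D(x,U)$ to be \emph{infinite} for every non-empty open $U$, which is exactly what makes the singleton argument work in a non-$T_1$ setting where $\{x\}$ can be open.
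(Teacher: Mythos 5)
Your proof is correct, but it is organized differently from the paper's. For necessity, the paper first invokes the component structure ($X$ must be connected since each class of $\sim$ is clopen and invariant), then uses its Theorem~\ref{the-1} to produce a unique periodic orbit $\mathcal{O}(x_0)$, shows the transitive point eventually lands in it, and finally rules out any point $y$ outside $\mathcal{O}(x_0)$ by observing that $V(y)\cap\mathcal{O}(x_0)=\emptyset$. You instead argue directly from the transitive point $x$: first $X=\mathcal{O}(x)$ (every $V(y)$ must be visited, and $V(y)$ consists of preimages of $y$), and then periodicity of $x$ via the observation that a non-periodic point with $\mathcal{O}(x)=X$ has $V(x)=\{x\}$ open, which the ``infinitely many returns'' form of transitivity forbids. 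Your route is more self-contained --- it needs only the explicit description of $V(\cdot)$ in a primal space and avoids Theorem~\ref{the-1} and the clopen-component fact from \cite{sg} --- while the paper's version leans on machinery it has already built and so reads shorter in context. You also supply the sufficiency argument (the primal topology of a single periodic orbit is indiscrete), which the paper dismisses as evident; that is a welcome addition. Both arguments are sound.
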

\begin{proof}
	The sufficiency is evident. Necessity. We know that $f$ is transitive. By \cite[Theorem~2.1]{sg}, $C$ is clopen, for all $C\in X/\sim$. In what follows, we will repeatedly use Theorem~\ref{the-1} without reference to it. $C$ is invariant, for all $C\in X/\sim$. Then $X$ is connected. If $f$ does not have a periodic orbit, then $f$ is not transitive. Then $X$ has only one periodic orbit, say $\mathcal{O}(x_0)$. Let $x_1$ be a transitive point of $f$. Then there exists $n\in \N$ such that $f^n(x_1) = x_0$. Then $f^n(\mathcal{O}(x_1)) = \mathcal{O}(x_0)$. If $y\in X\setminus \mathcal{O}(x_0)$, then $V(y)\cap \mathcal{O}(x_0) = \emptyset$. This contradicts the transitivity of $x_1$. Then $X=\mathcal{O}(x_0)$.
\end{proof}

The first statement of the following lemma is obvious. The second statement is an immediate consequence of the first one.
\begin{lemm}\label{lem-2}
	If $f(X) = X$ then every complete preorbit is an infinite preorbit and every point of $X$ has a complete preorbit. Moreover,
	$$
	V(x)\subseteq f^k(V(x)),
	$$
	for all $k\in\N$ and $x\in X$.
\end{lemm}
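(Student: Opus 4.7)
The plan is as follows. First I would observe that the first assertion is essentially an unpacking of the definition of \emph{complete preorbit}: recall that a preorbit $\hat{x}$ is complete iff either $D_{\hat{x}} = \N$, or $D_{\hat{x}} = \N_m$ with $\hat{x}(m)\notin f(X)$. When $f(X) = X$ the second alternative is vacuous, since every point of $X$ lies in $f(X)$; hence every complete preorbit must have domain $\N$, i.e.\ be infinite. To see that every point of $X$ admits a complete preorbit, I would, given $x\in X$, recursively pick preimages using the surjectivity of $f$ to produce a sequence $\hat{x}(0) = x$, $\hat{x}(k+1)\in f^{-1}(\hat{x}(k))$; this defines an infinite (hence, by the previous observation, complete) preorbit of $x$.

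For the ``moreover'' statement I would argue as follows. Fix $x\in X$ and $k\in\N$, and let $y\in V(x)$, so that $f^n(y) = x$ for some $n\in\N$. By the first part, $y$ admits an infinite preorbit $\hat{y}$; set $z := \hat{y}(k)$. Then $f^k(z) = y$, and consequently $f^{n+k}(z) = f^n(y) = x$, so $z\in V(x)$. This exhibits $y = f^k(z)$ as a point of $f^k(V(x))$, yielding $V(x)\subseteq f^k(V(x))$.

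I do not anticipate any genuine obstacle. The only point of any subtlety is recognizing that the definition of ``complete'' is engineered precisely so that surjectivity of $f$ rules out the finite alternative; once that is observed, the containment $V(x)\subseteq f^k(V(x))$ reduces to reading off the $k$-th term of an infinite preorbit of an arbitrary element of $V(x)$.
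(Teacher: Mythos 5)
Your proposal is correct and follows exactly the route the paper intends: the paper declares the first statement obvious and the second an immediate consequence of the first, and your argument simply fills in those details (surjectivity kills the finite alternative in the definition of complete preorbit, and reading off the $k$-th term of an infinite preorbit of $y\in V(x)$ produces the required preimage in $V(x)$). No gaps.
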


The following theorem characterizes the topological ergodicity of $f$.
\begin{theo}\label{the-5}
	Let $f$ be a self-map of $X\neq\emptyset$. The following statements are equivalent.
	\begin{enumerate}[label=\roman*.,ref=(\roman*)]
		\item\label{it-10} $f$ is topologically ergodic.
		\item\label{it-11} $f(X) = X$ and $V(x)\subseteq V(y)$ or $V(y)\subseteq V(x)$, for all $x,y\in X$.
		\item\label{it-12} $X = \mathcal{O}(x)\cup \hat{x}(\N)$, where $\hat{x}$ is an infinite preorbit of $x$ for some $x\in X$.
	\end{enumerate}
\end{theo}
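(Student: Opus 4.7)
The plan is to establish the equivalences cyclically, \ref{it-10} $\Rightarrow$ \ref{it-11} $\Rightarrow$ \ref{it-12} $\Rightarrow$ \ref{it-10}, using as a running dictionary the identity $V(x)\subseteq V(y) \Leftrightarrow y\in\mathcal{O}(x)$, which is immediate from the formula $V(x)=\{z\in X:f^{k}(z)=x\text{ for some }k\in\N\}$ recalled before Lemma~\ref{lem-1}. Under this dictionary the chain condition in \ref{it-11} simply says that for any $x,y\in X$, one of them lies in the forward orbit of the other; this reformulation will be used throughout.

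For \ref{it-10} $\Rightarrow$ \ref{it-11}, I would first show $f(X)=X$: if $x\in X\setminus f(X)$, then $V(x)=\{x\}$ and $x$ is non-periodic, so $D(V(x),V(x))\subseteq\{0\}$ is finite, contradicting topological ergodicity. For the chain condition, I would argue by contradiction: if $V(x)$ and $V(y)$ are incomparable, equivalently $y\notin\mathcal{O}(x)$ and $x\notin\mathcal{O}(y)$, then any $k\in D(V(x),V(y))$ yields $m,n\in\N$ and $z$ with $f^{m}(z)=x$ and $f^{n+k}(z)=y$; comparing $m$ with $n+k$ forces either $y=f^{n+k-m}(x)$ or $x=f^{m-n-k}(y)$, a contradiction, so $D(V(x),V(y))=\emptyset$, again impossible under \ref{it-10}.

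For \ref{it-11} $\Rightarrow$ \ref{it-12}, the chain condition makes any two points orbitally related, so $X$ is a single $\sim$-class and, by Theorem~\ref{the-1}\ref{it-6}, contains at most one periodic orbit. I would split into two cases. If $X$ has a periodic orbit $\mathcal{O}(p)$, comparing any $y$ with $p$ gives $y\in V(p)$, so $X=V(p)$, i.e.\ every point is eventually periodic. A branching argument using the chain condition then rules out both (a) two distinct preimages of a single point lying outside $\mathcal{O}(p)$ and (b) ``tails'' hanging from two distinct elements of $\mathcal{O}(p)$, since in either situation the $V(\cdot)$'s of the competing preimages would be incomparable. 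Hence the non-periodic part is a single linear chain attached to a unique $q\in\mathcal{O}(p)$, which $f(X)=X$ forces to be infinite or empty; setting $x=q$ and letting $\hat{x}$ descend from $q$ down the tail (or cycle around $\mathcal{O}(p)$ if the tail is empty) realizes \ref{it-12}. If $X$ has no periodic orbit, the same branching argument, now without the exception for $\mathcal{O}(p)$, forces $f^{-1}(y)$ to be a singleton for every $y\in X$, so picking any $x\in X$ and letting $\hat{x}$ be its unique backward chain gives $X=\mathcal{O}(x)\cup\hat{x}(\N)$.

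For \ref{it-12} $\Rightarrow$ \ref{it-10}, the decomposition $X=\mathcal{O}(x)\cup\hat{x}(\N)$ realizes every element as the image of some element, so $f(X)=X$ and Lemma~\ref{lem-2} gives $V(y)\subseteq f^{k}(V(y))$ for all $y\in X$ and $k\in\N$. Given non-empty open $U$ and $W$ with $y_{1}\in U$ and $y_{2}\in W$, a short case check using the form of $X$ in \ref{it-12} shows $y_{1}\in\mathcal{O}(y_{2})$ or $y_{2}\in\mathcal{O}(y_{1})$, so $V(y_{1})$ and $V(y_{2})$ are comparable and therefore intersect non-trivially; then $f^{k}(U)\supseteq V(y_{1})$ meets $W\supseteq V(y_{2})$ for every $k\in\N$, so $D(U,W)=\N$ and, as a byproduct, $f$ is strongly mixing. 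The main obstacle is the branching analysis inside \ref{it-11} $\Rightarrow$ \ref{it-12}: ruling out all possible forks in the backward tree, especially near the periodic orbit in the first case, is the one place where the structure has to be unpacked carefully.
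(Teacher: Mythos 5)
Your proposal is correct, and the two implications $\ref{it-10}\Rightarrow\ref{it-11}$ and $\ref{it-12}\Rightarrow\ref{it-10}$ run essentially parallel to the paper (the paper proves $\ref{it-11}\Rightarrow\ref{it-10}$ and $\ref{it-12}\Rightarrow\ref{it-11}$ separately, but your direct $\ref{it-12}\Rightarrow\ref{it-10}$ is just the composition; likewise your explicit $m$ versus $n+k$ comparison replaces the paper's appeal to the trichotomy $V(x)\subseteq V(y)$ or $V(y)\subseteq V(x)$ or $V(x)\cap V(y)=\emptyset$ from \cite{sg}, to the same effect). Where you genuinely diverge is in $\ref{it-11}\Rightarrow\ref{it-12}$. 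The paper avoids any structural analysis: it picks a non-periodic $x$ (if one exists), takes an infinite preorbit $\hat{x}$ supplied by Lemma~\ref{lem-2}, assumes $y\notin\mathcal{O}(x)\cup\hat{x}(\N)$, deduces $V(y)\subseteq V(\hat{x}(k))$ for all $k$, and then extracts a contradiction from the minimal $m$ with $f^m(y)=x$ and the minimal $n$ with $f^n(y)=\hat{x}(m)$ (these force $f^n(x)=x$). You instead classify the backward tree: your branching claims (a) and (b) are both correct --- in each forbidden configuration the two competing preimages $z_1,z_2$ would satisfy $z_2=f^j(z_1)$ with $j\ge 1$, which forces their common image to be periodic and places $z_2$ on the unique periodic orbit, contradicting the setup --- and together with surjectivity they do yield the single-infinite-tail picture. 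Your route is longer and requires the careful bookkeeping you flag at the end, but it buys something the paper only asserts afterwards without proof, namely the exact classification into the three conjugacy types of topologically ergodic self-maps ($\Z$ with the shift, a periodic orbit, and $\Z^-\cup\Z_m$); the paper's minimality trick is shorter but gives no structural information. Both arguments also recover $D(U,V)=\N$, hence strong mixing, as a byproduct, which is exactly what the subsequent Corollary uses.
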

\begin{proof}
	\ref{it-10} $\Rightarrow$ \ref{it-11}. Let us see that $f(X) = X$ by reduction to absurdity. Since $f(X)\subseteq X$, we assume that there exists $x\in X\setminus f(X)$. Then $V(x) = \{x\}$ and $f^k(x)\neq x$, for all $k\in\Z^+$. Then $D(V(x),V(x)) = \{0\}$ which contradicts that $f$ is topologically ergodic.
	
	We will see that $V(x)\subseteq V(y)$ or $V(y)\subseteq V(x)$, for all $x,y\in X$. We will prove this by reduction to absurdity. We know that in a primal topological space, $V(x)\subseteq V(y)$ or $V(y)\subseteq V(x)$ or $V(x)\cap V(y) = \emptyset$, for all $x,y\in X$, see the beginning of Section~2 of \cite{sg}. Then suppose there exist $x,y\in X$ such that $V(x)\cap V(y) = \emptyset$. Since $f^k(x)\notin V(y)$, then $f^k(V(x))\cap V(y) = \emptyset$, for all $k\in\N$, which contradicts that $f$ is topologically ergodic.
	
	\ref{it-11} $\Rightarrow$ \ref{it-10}. By Lemma~\ref{lem-2}, $V(x)\subseteq f^k(V(x))$, for all $k\in\N$ and $x\in X$. Let $U$ and $V$ be open and non-empty subsets of $X$. There exist $x\in U$ and $y\in V$. Then $V(x)\subseteq U$ and $V(y)\subseteq V$. So, $\emptyset\neq V(x)\cap V(y)\subseteq f^k(V(x))\cap V(y)\subseteq f^k(U)\cap V$, for all $k\in\N$, i.e. $D(U,V) = \N$.
	
	\ref{it-12} $\Rightarrow$ \ref{it-11} is obvious. Let us see that \ref{it-11} $\Rightarrow$ \ref{it-12}. Let $x\in X$. If $x$ is a periodic point and $X=\mathcal{O}(x)$, we are done. Otherwise, there exists $x\in X$ that is not a periodic point. So, suppose that $x$ is not a periodic point. By Lemma~\ref{lem-2}, there is an infinite preorbit of $x$, $\hat{x}$. Let us demonstrate by reduction to absurdity that $X=\mathcal{O}(x)\cup \hat{x}(\N)$. Suppose that there is $y\in X\setminus (\mathcal{O}(x)\cup \hat{x}(\N))$. Since $V(z)\subseteq V(z')$ implies that $z'\in \mathcal{O}(z)$, then $V(\hat{x}(k))\not\subseteq V(y)$, for all $k\in\N$. By hypothesis, $V(y)\subseteq V(\hat{x}(k))$, for all $k\in\N$. Let $m = \min\{k\in\N: f^k(y) = x\}$ and $n = \min\{k\in\N: f^k(y) = \hat{x}(m)\}$. Since $y\notin \mathcal{O}(x)\cup \hat{x}(\N)$, then $m,n\in\Z^+$. So, $f^n(x) = x$ which contradicts that $x$ is not a periodic point.
\end{proof}

There are three kinds of topologically ergodic self-maps with the primal topology. The first one is the self-maps that are conjugate to $f:\Z\to \Z$, given by $f(i) = i+1$. The second one is the self-maps whose domain is a periodic orbit. The third one is the self-maps that are conjugate to $f: X\to X$, where $X = \Z^-\cup\Z_m$ and $m\in\Z^+$, given by
$$
f(i) = \begin{cases} i+1, &\text{if $i\in\Z^-$;}\\ (i+1) \mod m, &\text{if $i\in\Z_m$.}\end{cases}
$$
Note that only the second kind of self-maps are transitive.

\begin{coro}
	$f$ is topologically ergodic if and only if it is weakly mixing if and only if it is strongly mixing.
\end{coro}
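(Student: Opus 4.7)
The plan is to close the circle of implications by showing that topologically ergodic implies strongly mixing; the other two implications (strongly mixing $\Rightarrow$ weakly mixing $\Rightarrow$ topologically ergodic) are already recorded in Section~\ref{sec-prel}.

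First I would invoke Theorem~\ref{the-5}: if $f$ is topologically ergodic, then $f(X)=X$ and the family $\{V(x):x\in X\}$ is totally ordered by inclusion. Given two non-void open sets $U$ and $V$, pick $x\in U$ and $y\in V$, so that $V(x)\subseteq U$ and $V(y)\subseteq V$. Without loss of generality $V(x)\subseteq V(y)$ (otherwise swap the roles), hence $V(x)\cap V(y)\neq\emptyset$. By Lemma~\ref{lem-2}, which applies because $f(X)=X$, we have $V(x)\subseteq f^k(V(x))$ for all $k\in\N$, and therefore
$$
\emptyset\neq V(x)\cap V(y)\subseteq f^k(V(x))\cap V(y)\subseteq f^k(U)\cap V
$$
for every $k\in\N$. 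This means $D(U,V)=\N$, so $\N\setminus D(U,V)=\emptyset$ is finite; hence $f$ is strongly mixing. This is essentially the same computation used in the implication \ref{it-11}~$\Rightarrow$~\ref{it-10} of Theorem~\ref{the-5}, so I would simply cite that argument.

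Combining with the implications strongly mixing $\Rightarrow$ weakly mixing $\Rightarrow$ topologically ergodic listed in Section~\ref{sec-prel}, all three notions coincide. I do not anticipate any genuine obstacle here, since Theorem~\ref{the-5} has already done the heavy lifting; the corollary is essentially a repackaging of the fact that its proof actually established the strong conclusion $D(U,V)=\N$, not merely its infiniteness.
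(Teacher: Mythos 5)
Your proposal is correct and follows essentially the same route as the paper: the paper's proof likewise observes that the argument for the implication \ref{it-11}~$\Rightarrow$~\ref{it-10} in Theorem~\ref{the-5} actually yields $D(U,V)=\N$, which gives strong mixing, and then closes the cycle with the general implications from Section~\ref{sec-prel}. No gaps.
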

\begin{proof}
	By the proof of Theorem~\ref{the-5}, if $f$ is topologically ergodic then $D(U,V) = \N$ for all non-empty open sets $U$ and $V$ in $X$. So $f$ is strongly mixing and also weakly mixing.
\end{proof}

By Theorem~\ref{the-2}, the notions of proximal, asymptotic, and syndetically proximal pairs are identical in Alexandroff spaces $(X,\tau)$ when the quasi-uniformity is $\mathcal{U}_\tau$. Therefore, we will only consider the notion of proximal, for pairs of points in $(X, \mathcal{U}_f)$.

We define the following relation in $X$. $x\triangleleft y$ if there are $m,n\in\N$ such that $f^m(x) = f^n(y)$ and $m\le n$. It is evident that $\triangleleft$ is a reflexive and transitive relation. Note that, as subsets of $X^2$, $\sim\ = \triangleleft\cup \triangleleft^{-1}$.

\begin{theo}
	Let $X$ be a non-empty set, $f$ be a self-map in $X$ and $C$ a connected component of $X$. The following statements apply.
	\begin{enumerate}[label=\roman*.,ref=(\roman*)]
		\item\label{it-13} $\mathrm{Prox}(\mathcal{U}_f,f) = \triangleleft$.
		\item\label{it-14} $\triangleleft|_C =\ \sim|_C = C\times C$ if and only if there is a periodic orbit in $C$.
	\end{enumerate}
\end{theo}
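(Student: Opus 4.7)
My plan for \ref{it-13} is to pass through Theorem~\ref{the-2}, which identifies proximal with asymptotic pairs for the quasi-uniformity $\mathcal{U}_f$, and then translate the asymptotic condition using the explicit formula $V(z) = \{w : \exists k\in\N,\ f^k(w) = z\}$ recalled at the start of this section. Since $\{U_{\tau_f}\}$ is a base for $\mathcal{U}_f$, $(x,y)$ is asymptotic iff there exists $n_0\in\N$ such that $f^k(y)\in V(f^k(x))$ for all $k\ge n_0$; unfolding the definition of $V$, this is the same as the existence, for each such $k$, of some $j_k\in\N$ with $f^{k+j_k}(y) = f^k(x)$. Evaluating at $k=n_0$ already produces a witness $(m,n)=(n_0,n_0+j_{n_0})$ with $m\le n$ of $x\triangleleft y$. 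Conversely, starting from any $m\le n$ with $f^m(x)=f^n(y)$, applying $f^{k-m}$ to both sides for each $k\ge m$ yields $f^k(x) = f^{k+(n-m)}(y)$, which places $f^k(y)$ in $V(f^k(x))$ for every $k\ge m$; hence $(x,y)$ is asymptotic, and by Theorem~\ref{the-2} also proximal.

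For \ref{it-14}, observe first that $\sim|_C = C\times C$ holds automatically because $C$ is by definition a $\sim$-equivalence class, so the content reduces to showing that $\triangleleft|_C = C\times C$ is equivalent to the existence of a periodic orbit in $C$. For sufficiency, suppose $\mathcal{O}(x_0)\subseteq C$ is a periodic orbit of period $p$. Theorem~\ref{the-1}\ref{it-7} gives, for each $z\in C$, some $n_z\in\N$ with $f^{n_z}(z)=x_0$. Given arbitrary $y,z\in C$, replacing $n_z$ by $n_z+\ell p$ for sufficiently large $\ell$ produces an exponent $n\ge n_y$ with $f^n(z)=x_0=f^{n_y}(y)$, so $y\triangleleft z$, and since the pair was arbitrary $\triangleleft|_C = C\times C$.

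For necessity, I assume $\triangleleft|_C = C\times C$ and pick any $x\in C$; since $C$ is $f$-invariant we have $f(x)\in C$, and the hypothesis applied to the pair $(x,f(x))$ supplies $m\le n$ with $f^m(x) = f^n(f(x)) = f^{n+1}(x)$. Because $n+1>m$, the point $f^m(x)$ is periodic, so $C$ contains a periodic orbit. The only delicate step throughout is aligning the asymmetry built into the definition of $\triangleleft$ (the smaller exponent must land on $x$) with the asymmetry of the condition $f^k(y)\in V(f^k(x))$ coming from $U_{\tau_f}$; once that bookkeeping is in place, everything else is a matter of inserting powers of $f$ and, in \ref{it-14}, using the period $p$ of $x_0$ to boost exponents.
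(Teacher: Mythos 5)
Your proposal is correct and follows essentially the same route as the paper: both directions of \ref{it-13} rest on the same translation of $f^k(y)\in V(f^k(x))$ into the existence of exponents $m\le n$ with $f^m(x)=f^n(y)$, and \ref{it-14} uses the same period-boosting trick for sufficiency and the same test pair $(x,f(x))$ for necessity (you phrase the latter directly, the paper by contradiction, but the idea is identical). No gaps.
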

\begin{proof}
	Let us prove \ref{it-13}. Suppose that $(x,y)\in\mathrm{Prox}(\mathcal{U}_f,f)$. Then there is $m\in \N$ such that $f^m(y)\in V(f^m(x))$. Therefore, there exists $n\in\N$, such that $f^{m+n}(y) = f^m(x)$. So, $x\triangleleft y$. Then $\mathrm{Prox}(\mathcal{U}_f,f) \subseteq \triangleleft$. On the other hand, if $(x,y)\in \triangleleft$ then there are $m,n\in\N$ such that $f^m(x) = f^n(y)$ and $m\le n$. Then $f_2^k(x,y)\in U_f$, for all $k\ge m$. Then $(x,y)\in\mathrm{Prox}(\mathcal{U}_f,f)$. So, $\mathrm{Prox}(\mathcal{U}_f,f) = \triangleleft$.
	
	Let us prove \ref{it-14}. By \cite[Theorem~2.1]{sg}, $\sim|_C = C\times C$. Sufficiency. We suppose that $C$ contains a periodic orbit $\mathcal{O}(z)$, with period $p$. Let $(x,y)\in C$. By Theorem~\ref{the-1}, there are $m,n\in\N$ such that $f^m(x)=z=f^n(y)=f^{n+kp}(y)$, for all $k\in\N$. Then $(x,y)\in\triangleleft|_C$. So, $C\times C\subseteq \triangleleft|_C$. By Theorem~\ref{the-1}, $\triangleleft|_C\subseteq C\times C$. Then $\triangleleft|_C = C\times C$.
	
	Necessity. We suppose that $\triangleleft|_C = C\times C$. Assume, for contradiction, that $C$ does not contain a periodic orbit. Since $X\neq \emptyset$, there exists $x\in X$. By Theorem~\ref{the-1}, $\mathcal{O}(x)\subseteq C$. By hypothesis, $f^m(x)\neq f^n(x)$ if $m\neq n$. Let $y = f(x)\in C$. If $f^m(x) = f^n(y)$ then $f^m(x) = f^{n+1}(x)$. So, $n = m-1$. Then $(x,y)\in C\times C\setminus \triangleleft|_C$. Which is a contradiction.
\end{proof}

\begin{coro}
	Let $X$ be a non-empty set and $\mathcal{U}$ a quasi-uniformity of $X$ that induces the topology $\tau_f$. Then $\triangleleft \subseteq \mathrm{Prox}(\mathcal{U}, f)$.
\end{coro}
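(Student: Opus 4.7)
The plan is to deduce the corollary from the preceding theorem together with Proposition~\ref{pro-1}, essentially by observing that the proximal relation is antitone in the quasi-uniformity.

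First, I would invoke Proposition~\ref{pro-1} (applied to the Alexandroff topology $\tau_f$): any quasi-uniformity $\mathcal{U}$ on $X$ that induces $\tau_f$ satisfies $\mathcal{U}\subseteq \mathcal{U}_f$. Then I would note the general, one-line observation that if $\mathcal{U}\subseteq \mathcal{U}'$ are two quasi-uniformities, then $\mathrm{Prox}(\mathcal{U}',f)\subseteq \mathrm{Prox}(\mathcal{U},f)$: a pair $(x,y)$ which is proximal with respect to the finer structure $\mathcal{U}'$ must satisfy the condition ``$D_{f_2}((x,y),U)$ is infinite'' for every $U\in\mathcal{U}'$, in particular for every $U\in\mathcal{U}$, and so it is proximal with respect to $\mathcal{U}$.

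Combining these two facts yields $\mathrm{Prox}(\mathcal{U}_f,f)\subseteq \mathrm{Prox}(\mathcal{U},f)$. Finally, I would apply statement \ref{it-13} of the previous theorem, which identifies $\mathrm{Prox}(\mathcal{U}_f,f) = \triangleleft$, to conclude $\triangleleft \subseteq \mathrm{Prox}(\mathcal{U},f)$.

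There is no real obstacle here: the argument is purely a matter of chaining together an already-proved identity with the universal property of $\mathcal{U}_f$ among quasi-uniformities inducing $\tau_f$. The only point to be slightly careful about is the direction of monotonicity, namely that ``finer quasi-uniformity'' corresponds to ``smaller proximal set'', which is immediate from the definition since a finer quasi-uniformity simply imposes more entourages $U$ for which $D_{f_2}((x,y),U)$ must be infinite.
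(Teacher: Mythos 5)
Your argument is correct and is evidently the intended one: the paper omits the proof of this corollary, and the only natural route is exactly the chain you describe, namely $\mathcal{U}\subseteq\mathcal{U}_f$ from Proposition~\ref{pro-1}, the antitonicity of $\mathrm{Prox}$ in the quasi-uniformity (fewer entourages means a weaker condition), and the identity $\mathrm{Prox}(\mathcal{U}_f,f)=\triangleleft$ from statement~\ref{it-13} of the preceding theorem. No gaps.
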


\thanks{\emph{Acknowledgments.} I want to thank Professor Jorge Vielma for introducing me to primal spaces during his visit to the Department of Mathematics at I.V.I.C. in April 2025.

I would also like to thank Professor Víctor Sirvent for his helpful comments, which have significantly improved both the content and style of this article.}

\end{document}